\theoremstyle{plain}
\newtheorem{thm}{\protect\theoremname}
\theoremstyle{remark}
\newtheorem{rem}{\protect\remarkname}
\providecommand{\remarkname}{Remark}
\providecommand{\theoremname}{Theorem}
\begin{document}

\begin{frontmatter}{}

\title{A Robust Unscented Transformation for Uncertain Moments}

% Copyright information:
\tnotetext[t1]{\copyright 2019. This manuscript version is made available under the CC BY-NC-ND 4.0 license \url{http://creativecommons.org/licenses/by-nc-nd/4.0/}. 
This article has been accepted for publication in a future issue of the Journal of Franklin Institute, but has not been fully edited. Content may change prior to final publication. 
The final version of record is available at \url{https://doi.org/10.1016/j.jfranklin.2019.02.018}.}

\author[unb]{Hugo T. M. Kussaba\corref{cor1}}

\ead{kussaba@lara.unb.br}

\author[unb]{João Y. Ishihara}

\author[unb]{Leonardo R. A. X. Menezes}

\cortext[cor1]{Corresponding author}

\address[unb]{Department of Electrical Engineering, University of Brasília --
UnB, 70910-900, Brasília, DF, Brazil}
\begin{abstract}
This paper proposes a robust version of the unscented transform (UT)
for one-dimensional random variables. It is assumed that the moments
are not exactly known, but are known to lie in intervals. In this
scenario, the moment matching equations are reformulated as a system
of polynomial equations and inequalities, and it is proposed to use
the Chebychev center of the solution set as a robust UT. This method
yields a parametrized polynomial optimization problem, which in spite
of being NP-Hard, can be relaxed by some algorithms that are proposed
in this paper.
\end{abstract}
\begin{keyword}
Unscented Transform \sep Polynomial Optimization \sep Lasserre's
hierarchy \sep Statistics \sep Filtering.
\end{keyword}

\end{frontmatter}{}

\section{Introduction}

In numerous problems of statistics and stochastic filtering, one is
often interested in calculating the posterior expectation of a continuous
random variable $X$ that undergoes a nonlinear transform $f$, viz.:
\begin{equation}
\textrm{E}\left\{ f(X)\right\} =\int_{\mathbb{R}}f(\xi)p_{X}(\xi)d\xi.\label{eq:posterior_distribution}
\end{equation}

It is not always possible to have a closed-form expression for this
integral in terms of elementary functions: if this integral does not
satisfy the hypothesis of Liouville's theorem (see for instance, Section~12.4
of \citep{GVL:92}), then the antiderivative of this integral cannot
be expressed in terms of elementary functions. Thus, instead of using
analytical methods to calculate (\ref{eq:posterior_distribution}),
in many situations numerical methods must be employed. 

A common way to numerically calculate (\ref{eq:posterior_distribution})
is by using the Monte Carlo integration method \citep{PP:02}, which
is a stochastic sampling technique: by taking a sufficiently large
number of samples of the random variable $X$, one can approximate
the probability density function $p_{X}$ and obtain an estimate for
(\ref{eq:posterior_distribution}). However, this method can be very
demanding computationally, since it frequently employs several thousands
of simulations to obtain the statistics of the final result.

Another way to calculate (\ref{eq:posterior_distribution}) with less
computational burden than Monte Carlo integration method is the technique
of Unscented Transform (UT). Originally proposed for the problem of
extending the Kalman filter for nonlinear dynamical systems \citep{JUD:95},
this method has also been applied in several problems of engineering,
such as in the analysis of the sensitivity of antennas \citep{MSSTC:10}
 and in circuit optimization \citep{CCDBMKAR:11}. 

Different from the Monte Carlo integration method, the UT is a deterministic
sampling technique: in place of choosing random points to approximate
$p_{X}$, points known as sigma points are deterministically selected
to capture the statistics of $p_{X}$. This is accomplished by generating
a discrete distribution $p'_{X}$ having the same first and second
(and possibly higher) moments of $p_{X}$. The mean, covariance and
higher moments of the transformed ensemble of sigma points can then
be computed as the estimate of the nonlinear transformation $f$ of
the original distribution \citep{JUD:95}, \citep{MIBV:15}. Thus,
at least the first moments of $p_{X}$ must exist and be exactly known.
In several circumstances, however, this is not valid: for instance,
it may be the case that the distribution does not even have the first
moment (one example is the Cauchy distribution \citep{Kri:10}). In
the case that the moments can be assumed to exist, it is usual that
they are not precisely known, but it is still possible to have upper
and lower bounds for the exact value of moments from statistical experiments
or by the use, for instance, of Chebychev's inequality \citep{PP:02}.

In this paper it is designed a technique for generating sigma points
when the exact value of the moments are not known, but upper and lower
bounds for the unknown moments are known. In this case, the moment
matching equations of UT are no longer just a system of polynomial
equations but a system of polynomial equations with polynomial inequalities.
Furthermore, since this system can have more than one solution, it
is possible to choose a set of sigma points which minimizes a given
cost function by formulating the problem as a polynomial optimization
problem (POP). Although the solution to this problem is in general
computationally infeasible \citep{MK:87}, by using Lasserre's hierarchy
of semidefinite programming relaxations one can approximate the solution
of the original POP problem by the solution of a computationally feasible
convex optimization problem\citep{Las:01,Las:09}.

The main contribution of this paper is the introduction of the concept
of UT robustness in the sense of exploiting the upper and lower bounds
for moments. A robust UT is proposed in \citep{Van:01} but robustness
has different meaning and it is achieved by matching precisely known
high order moments. It is important to note that Lasserre's hierarchy
of relaxations was applied in earlier investigations of the moment
matching problem \citep{MP:13}, but its use was limited to polynomial
equations while here polynomial inequalities are also taken into account.

This paper is organized as follow. Some preliminaries for the robust
UT are presented in Section~\ref{sec:Unscented-Transform}. The robust
UT itself is detailed in Section~\ref{sec:Proposed_Technique}. Details
about the computation of robust sigma points is presented in Section~\ref{sec:Computation-of-robust-sigma-points}.
The computation of UT transform is presented in Section~\ref{sec:Computation-of-UT}.
Finally, the conclusion is presented in Section~\ref{sec:Conclusion}.

\section{Unscented Transform \label{sec:Unscented-Transform}}

The rationale behind the unscented transformation is that it is easier
to calculate a moment for a discrete distribution than a continuous
one. In fact, to compute the posterior distribution of a random scalar
variable $X$ with distribution $p_{X}$ by a function $f$, one must
calculate the integral (\ref{eq:posterior_distribution}) while, in
other hand, if $Z$ is a discrete random variable with $m+1$ atoms
$z_{i}$ and distribution $p_{Z}$, one only needs to calculate the
sum 
\begin{equation}
E\left\{ f(Z)\right\} =\sum_{i=1}^{m+1}f(z_{i})p_{Z}(z_{i}).\label{eq:posterior_discrete}
\end{equation}

Henceforth, if it is possible to choose the atoms and their weights
of an adequate $p_{Z}$ to approximate $p_{X}$, then the value $E\{f(Z)\}$
would be a good approximation to $E\{f(X)\}$, but with (\ref{eq:posterior_discrete})
being easier to compute than (\ref{eq:posterior_distribution}). 
Thus, the principle behind the UT is to approximate the continuous
distribution $p_{X}$ by the discrete distribution $p_{Z}$ by equating
the first $m$ moments of these distributions. In other words, the
following equations must be satisfied:
\begin{align}
E\{X^{k}\} & =E\{Z^{k}\}\\
 & =\sum_{i=1}^{m+1}z_{i}^{k}p_{Z}(z_{i}),\ k=1,\ldots,m,\label{eq:moment_matching_equation_polynomial}
\end{align}
where is supposed the access to $E\{X^{k}\}$ for $k=1,\ldots,m$. 

Given $E\{X^{k}\}$ it is always possible to find $z_{i}$ and $p_{Z}(z_{i})$,
$i=1,\ldots,m+1$ (which will be called sigma point and its weight,
respectively) satisfying (\ref{eq:moment_matching_equation_polynomial}).
In fact, as it will be stated next, there is at least one solution
for the equations
\begin{equation}
E\left\{ g_{k}(Z)\right\} =E\left\{ g_{k}(X)\right\} ,\ k=1,\ldots,m,\label{eq:moment_matching_equation_generic}
\end{equation}
where $g_{k}:\mathbb{R}\rightarrow\mathbb{R}$ are continuous functions,
$g_{k}\not\equiv g_{j}$ for $k\ne j$ and from which (\ref{eq:moment_matching_equation_polynomial})
is a particular case with $g_{k}(x)=x^{k}$. That (\ref{eq:moment_matching_equation_generic})
has at a least one solution is stated next in Theorem~\ref{thm:existence_of_solution}.

\begin{thm}
\label{thm:existence_of_solution}Consider that the $m$ moments $E\{g_{k}(X)\}$,
$k=1,\ldots,m$, are given. The system~(\ref{eq:moment_matching_equation_generic})
in terms of variables $z_{i}$ and $p_{Z}(z_{i})$ has at least one
solution with at most $m+1$ sigma points.\footnote{The proof of the theorem is known in the mathematical literature in
the context of the Caratheodory's theorem. Since it is less known
in the context of UT literature, the proof is presented here for easy
reference. See, e.g., \citep{DS:97}.} 
\end{thm}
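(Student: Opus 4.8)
The plan is to recast the moment-matching system as a convexity statement in $\mathbb{R}^{m}$ and then invoke Carath\'eodory's theorem. Collect the test functions into the map $\phi:\mathbb{R}\to\mathbb{R}^{m}$, $\phi(x)=(g_{1}(x),\ldots,g_{m}(x))$, and write $\mu=(E\{g_{1}(X)\},\ldots,E\{g_{m}(X)\})=E\{\phi(X)\}$. A discrete distribution with atoms $z_{1},\ldots,z_{n}$ and weights $w_{i}=p_{Z}(z_{i})$ satisfies (\ref{eq:moment_matching_equation_generic}) exactly when $\sum_{i=1}^{n}w_{i}\,\phi(z_{i})=\mu$ with $w_{i}\ge 0$ and $\sum_{i}w_{i}=1$; that is, exactly when $\mu$ is a convex combination of finitely many points of the image $\phi(\mathbb{R})$. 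Thus the theorem reduces to two assertions: that $\mu$ belongs to the convex hull $\mathrm{conv}\,\phi(\mathbb{R})$, and that such a membership can always be witnessed by at most $m+1$ of the points $\phi(z_{i})$.

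First I would establish that $\mu\in\mathrm{conv}\,\phi(\mathbb{R})$, i.e.\ that the barycenter $E\{\phi(X)\}$ lies in the convex hull of the range of $\phi$. The natural tool is the separating hyperplane theorem: if $\mu$ could be strictly separated from $\mathrm{conv}\,\phi(\mathbb{R})$ by some $c\in\mathbb{R}^{m}$ and level $\alpha$ with $\langle c,\phi(x)\rangle\le\alpha$ for all $x$ but $\langle c,\mu\rangle>\alpha$, then integrating the pointwise bound against $p_{X}$ would give $\langle c,\mu\rangle=E\{\langle c,\phi(X)\rangle\}\le\alpha$, a contradiction. Hence $\mu$ cannot be strictly separated and lies at least in the closed convex hull.

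Then I would apply Carath\'eodory's theorem: every point of the convex hull of a set $S\subseteq\mathbb{R}^{m}$ is a convex combination of at most $m+1$ points of $S$. Taking $S=\phi(\mathbb{R})$ yields weights $w_{1},\ldots,w_{m+1}\ge 0$ summing to one and points $x_{1},\ldots,x_{m+1}\in\mathbb{R}$ with $\mu=\sum_{i=1}^{m+1}w_{i}\,\phi(x_{i})$. Setting $z_{i}=x_{i}$ and $p_{Z}(z_{i})=w_{i}$ then produces a solution of (\ref{eq:moment_matching_equation_generic}) with at most $m+1$ sigma points, as claimed.

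The step I expect to be the main obstacle is upgrading \emph{closed} convex hull to convex hull: the separation argument only places $\mu$ in $\overline{\mathrm{conv}}\,\phi(\mathbb{R})$, whereas Carath\'eodory needs $\mu$ to be an honest finite convex combination of points actually on the curve, which may fail to be closed (the curve can escape to infinity). I would close this gap by passing to the smallest face of the closed convex hull whose relative interior contains $\mu$ and arguing---much as in the scalar case, where $E\{g_{1}(X)\}$ cannot equal $\sup g_{1}$ unless that value is attained---that the barycenter never sits on an unattained boundary piece; alternatively one may simply invoke the classical measure-theoretic form of this result, for which the footnote's reference \citep{DS:97} is cited.
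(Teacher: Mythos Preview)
Your approach is essentially the same as the paper's: both place the moment vector $P=(E\{g_{1}(X)\},\ldots,E\{g_{m}(X)\})$ in the convex hull of the curve $G=\{(g_{1}(x),\ldots,g_{m}(x)):x\in\mathbb{R}\}$ and then invoke Carath\'eodory's theorem to extract at most $m+1$ atoms with convex weights. The paper simply asserts $P\in\mathrm{conv}\,G$ without justification (deferring to the reference in the footnote), so your separating-hyperplane argument and your explicit flagging of the closed-versus-open convex hull subtlety actually go beyond what the paper's own proof supplies.
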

\begin{proof}
Since the point $P=\left(E\left\{ g_{1}(X)\right\} ,\ldots,E\left\{ g_{m}(X)\right\} \right)$
belongs to the convex hull of the set $G=\left\{ \left(g_{1}(x),\ldots,g_{m}(x)\right)\mid x\in\mathbb{R}\right\} $,
then Caratheodory's theorem \citep[Theorem 1.3.6]{HJL:01}  gives
that $P$ can be written as a convex combination of at most $m+1$
points in $G$. Thus, one has that there exist $\theta_{i}\ge0$ and
$z_{i}\in\mathbb{R}$ such that
\begin{equation}
E\{g_{k}(X)\}=\sum_{i=1}^{m+1}\theta_{i}g_{k}(z_{i}),\ k=1,\ldots,m\label{eq:existence_eq1}
\end{equation}
and 
\begin{equation}
\sum_{i=1}^{m+1}\theta_{i}=1.\label{eq:existence_eq2}
\end{equation}

Taking $Z$ to be the discrete random variable with probability distribution
given by
\[
p_{Z}(k)=\begin{cases}
\theta_{i}, & \text{if }k=z_{i},\\
0, & \text{otherwise,}
\end{cases}
\]
one has that equations (\ref{eq:existence_eq1}) and (\ref{eq:existence_eq2})
are exactly the equations given by (\ref{eq:moment_matching_equation_generic}).
\end{proof}
It is important to note that Theorem~\ref{thm:existence_of_solution}
only states the existence of sigma points satisfying (\ref{eq:moment_matching_equation_generic}),
but it does not give any conclusion about uniqueness. In fact,  many
solutions are possible, and the choice of an adequate set of sigma
points has been investigated thoroughly in the literature \citep{MIBV:15,RYB:18}.
\begin{rem}
Specifically for the first moments of $X$ focused in this paper,
i.e. the case that $g_{k}(x)=x^{k}$, $k=0,\ldots,m+1$, one can also
see (\ref{eq:moment_matching_equation_polynomial}) as a Gaussian
quadrature integration scheme with $p_{X}$ being the weighting function
and $z_{i}$ and $\omega_{i}:=p_{Z}(z_{i})$, $i=1,\ldots,m+1$, being
respectively the nodes and their weights in the quadrature formula.
Thus, depending on the probability density function $p_{X}$, the
sigma points can be readily calculated as the roots of an orthogonal
polynomial \citep[Section 4.6]{PTVF:07}. For instance, if $p_{X}$
is a normal distribution, then the sigma points are the roots of a
Hermite polynomial.

One can note that if the only intention were to find some discrete
random variable such that $E\{f(X)\}=E\{f(Z)\}$, it is always possible
to find a $Z$ variable with two sigma points. In fact, in Theorem~\ref{thm:existence_of_solution},
consider $m=1$ and $f=g_{1}$. However, this would imply the knowledge
of the function $f$. In the UT reasoning, it is sought a greater
number of sigma points in order that the approximation $E\{f(X)\}=E\{f(Z)\}$
be valid for a greater number of functions $f$. In \citep{JU:04}
it is shown that the approximation is good for any function $f$ which
can be well approximated by its $m$-order Taylor representation.
\end{rem}
On top of that, the larger is $m$, the more precise is the approximation
for $E\{f(X)\}$: an estimate for the error $\int_{a}^{b}p_{X}(x)f(x)\,dx-\sum_{i=1}^{m}\omega_{i}f(z_{i})$
is given by
\[
\frac{f^{(2m)}(\xi)}{(2m)!}\int_{a}^{b}p_{X}(x)h_{m}^{2}(x)\,dx,
\]
where $\xi\in\left(a,b\right)\subset\mathbb{R}$ and $h_{m}$ is the
associated monic orthogonal polynomial of degree $m$ associated to
$p_{X}$ \citep[Theorem 3.6.24]{SB:02}. As a matter of fact, the
Gauss quadrature computed integral for $E\{f(X)\}$ is exact for all
$f(x)$ that are polynomials of degree less or equal than $2m-1$
\citep[pp. 172-175]{SB:02}. 

Since Theorem \ref{thm:existence_of_solution} takes in account generic
functions $g_{k}$, one can analyze very general moment settings (for
instance, the case of fractional moments is worked in \citep{CHL:12}).

Summing up, one can note that the basic assumption for the UT theory
until now is that the moments are precisely known. However, this assumption
can be too strong in practical situations where the moments are estimated
from experiments and then, their values are only known to be in some
intervals. In this work, we deal with the scenario of unknown moments
and propose to calculate a robust set of sigma-points in the sense
of minimizing the worst possible error between this robust choice
of sigma-points and the sigma-points computed by using the real values
of the moments.

\section{Robust UT\label{sec:Proposed_Technique}}

To motivate the proposed robust UT consider a normally distributed
random variable $X$ with mean $\mu$ and variance $V$. In the case
where $\mu$ and $V$ are given and $m=2$, it is known that a set
of sigma points are given by \citep{MIBV:15}
\begin{equation}
\begin{array}{ccc}
z_{1} & = & \mu-\sqrt{3V},\\
z_{2} & = & \mu,\\
z_{3} & = & \mu+\sqrt{3V},
\end{array}\label{eq:sigma_points_for_normal}
\end{equation}
with its weights given respectively by $\omega_{1}=\frac{1}{6}$,
$\omega_{2}=\frac{2}{3}$ and $\omega_{3}=\frac{1}{6}$. 

Suppose now that the value of $V$ is not exactly known, but an upper
bound $\overline{V}$ and a lower bound $\underline{V}$ for $V$
is known, i.e. $V\in\left[\underline{V},\overline{V}\right]$. A naive
approach for choosing the sigma points would be to use the mean value
between $\overline{V}$ and $\underline{V}$ for $V$ in (\ref{eq:sigma_points_for_normal}).
In this case, the sigma points $z_{1}$ and $z_{3}$ would be given
by
\begin{align*}
z_{1}= & z_{1}^{M}\coloneqq\mu-\sqrt{3(\underline{V}+\overline{V})/2},\\
z_{3}= & z_{3}^{M}\coloneqq\mu+\sqrt{3(\underline{V}+\overline{V})/2},
\end{align*}
and $z_{2}$, and the weights $\omega_{1}$, $\omega_{2}$ and $\omega_{3}$
unchanged.

As Figure~\ref{fig:choice_of_sigma_points} illustrates, this can
be a pessimistic choice for the sigma points, since if, for example,
the real value of $V$ is in fact nearer of $\bar{V}$, a better choice
of sigma points would be nearer to the point $\left(z_{1},z_{3}\right)=\left(\mu-\sqrt{3\overline{V}},\mu+\sqrt{3\overline{V}}\right)$.
In fact, as can be seen in Figure~\ref{fig:choice_of_sigma_points},
a preferable choice for the sigma points would be at the center of
the region: 
\begin{align*}
z_{1}= & z_{1}^{C}\coloneqq\mu-(\sqrt{3}/2)(\sqrt{\underline{V}}+\sqrt{\overline{V}}),\\
z_{3}= & z_{3}^{C}\coloneqq\mu+(\sqrt{3}/2)(\sqrt{\underline{V}}+\sqrt{\overline{V}}).
\end{align*}
This choice is precisely the Chebychev center\footnote{There are two non-equivalent definitions of Chebychev center of a
bounded set with non-empty interior: the first definition is the center
of the minimal-radius ball enclosing this set, and the second is the
center of the largest incribed ball in this set \citep{BV:04}. In
this paper, only the first definition will be used.} of the set of possibles sigma points given that $V\in\left[\underline{V},\overline{V}\right]$.
It has the property of having the minimum worst possible error between
the chosen sigma point set and the sigma point set corresponding to
the true value of $V$.

\begin{figure}[H]
\begin{centering}
\includegraphics[scale=0.8]{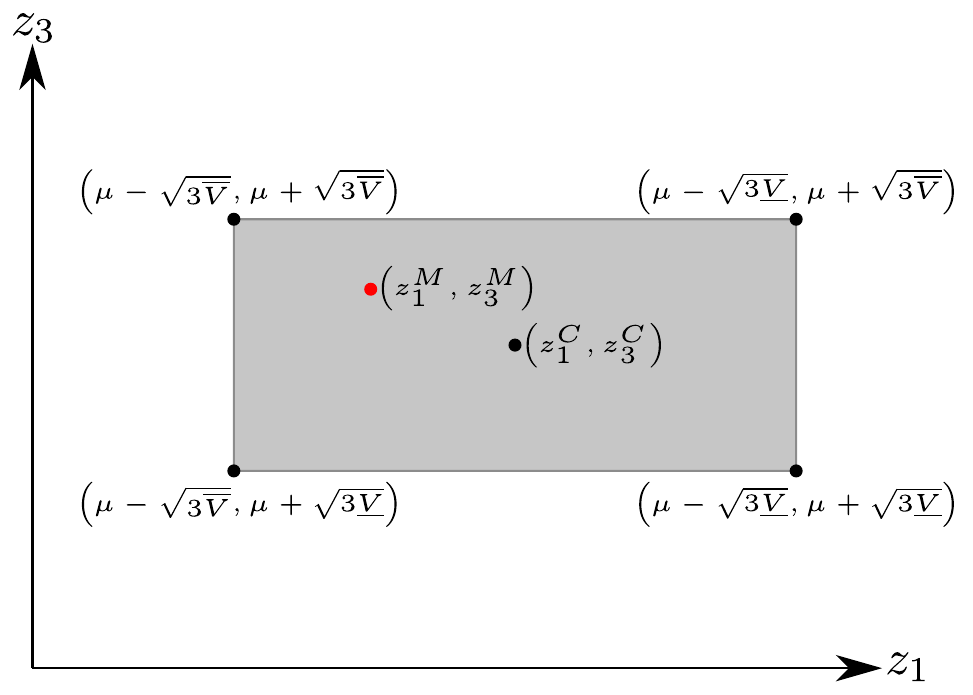}
\par\end{centering}
\caption{Possible choices of sigma points. \label{fig:choice_of_sigma_points}}
\end{figure}

Consider now a general scenario where some of the moments are not
known, but upper and lower bounds for them are. In this case, the
set of possible sigma points and their weights would be in the semialgebraic
set $S\subset\mathbb{R}^{2m+2}$ of elements $x=(z_{1},\ldots,z_{m+1},\omega_{1},\ldots,\omega_{m+1})$,
defined as solution of the system
\begin{equation}
S:\qquad\begin{array}{cccc}
\omega_{j} & \ge & 0, & j\in\{1,\ldots,m\},\\
\sum_{i=1}^{m+1}\omega_{i} & = & 1,\\
\sum_{i=1}^{m+1}z_{i}^{k_{1}}\omega_{i} & = & E\{X^{k_{1}}\}, & k_{1}\in\mathcal{K}_{1},\\
\sum_{i=1}^{m+1}z_{i}^{k_{2}}\omega_{i} & \le & u_{k_{2}}, & k_{2}\in\mathcal{K}_{2},\\
\sum_{i=1}^{m+1}z_{i}^{k_{2}}\omega_{i} & \ge & \ell_{k_{2}}, & k_{2}\in\mathcal{K}_{2},
\end{array}\label{eq:moment_matching_equation_with_uncertainty}
\end{equation}
where $\mathcal{K}_{1}$ (resp. $\mathcal{K}_{2}$) is the set of
indexes $k$ for which the $k$-moment is known (resp. unknown), $\mathcal{K}_{1}\cup\mathcal{K}_{2}=\{1,\ldots,m\}$
and $u_{k_{2}}$ and $\ell_{k_{2}}$ are respectively known upper
and lower bounds for the $k_{2}$-moment. 

Although Theorem~\ref{thm:existence_of_solution} guarantees a solution
for (\ref{eq:moment_matching_equation_with_uncertainty}), this solution
may not be unique. On that account, there is more than one set of
sigma points and corresponding weights, and it is natural to ask which
is a good choice of sigma points and weights. Based on the previous
example, a so-called robust choice would be the Chebychev center of
the semialgebraic set defined by (\ref{eq:moment_matching_equation_with_uncertainty})
since this choice minimizes the worst possible error between the chosen
sigma point set and the sigma point set corresponding to the possible
true values of the moments of $X$.

Different from the previous example, however, in this generic scenario
it may be the case that an analytic formula to express the sigma points
as the function of the moments of the random variable $X$ is not
available, and the following optimization problem must be solved to
find the Chebychev center:
\begin{equation}
(z_{1}^{C},\ldots,z_{m+1}^{C},\omega_{1}^{C},\ldots,\omega_{m+1}^{C})=\underset{\hat{x}\in\mathbb{R}^{2m+2}}{\arg\min}\ \underset{x\in S}{\max}\|x-\hat{x}\|^{2},\label{eq:Chebychev_center_optimization}
\end{equation}
where $S$ is the solution set for (\ref{eq:moment_matching_equation_with_uncertainty})
and $x=(z_{1},\ldots,z_{m+1},\omega_{1},\ldots,\omega_{m+1})$.

It is important to note that the optimization problem in (\ref{eq:Chebychev_center_optimization})
is not always guaranteed to have a solution, since the set $S$ may
not be bounded (if, for example, $\omega_{1}$ is zero, then $z_{1}$
can take any value). Nevertheless, it still makes sense to use the
Chebychev center of a large bounded subset of $S$ to try to choose
a set of sigma points minimizing the worst possible estimation error.
In fact, for any (bounded or unbounded) set $S$, one can take an
$\varepsilon>0$ and consider the Chebychev center of the semi-algebraic
set $S_{\varepsilon}\subseteq S$ defined as the set of points 
\[
x=(z_{1},\ldots,z_{m+1},\omega_{1},\ldots,\omega_{m+1})
\]
satisfying
\begin{equation}
S_{\varepsilon}:\qquad\begin{array}{cccc}
\omega_{j} & \ge & \varepsilon, & j\in\{1,\ldots,m\},\\
\sum_{i=1}^{m+1}\omega_{i} & = & 1,\\
\sum_{i=1}^{m+1}z_{i}^{k_{1}}\omega_{i} & = & E\{X^{k_{1}}\}, & k_{1}\in\mathcal{K}_{1},\\
\sum_{i=1}^{m+1}z_{i}^{k_{2}}\omega_{i} & \le & u_{k_{2}}, & k_{2}\in\mathcal{K}_{2},\\
\sum_{i=1}^{m+1}z_{i}^{k_{2}}\omega_{i} & \ge & \ell_{k_{2}}, & k_{2}\in\mathcal{K}_{2}.
\end{array}\label{eq:approx_moment_matching_equation_with_uncertainty}
\end{equation}
As $\varepsilon$ decreases, $S_{\varepsilon}$ covers a larger part
of $S$. Fig.~\ref{fig:approximation-of-semi-algebraic-set} illustrates
how the set $S_{\varepsilon}$ approximates the set $S$ by choosing
a sufficiently small $\varepsilon>0$.

While the Chebychev center of $S$ may not exist, the next theorem
assures the existence of the Chebychev center of $S_{\varepsilon}$
for any $\varepsilon>0$.
\begin{thm}
\label{thm:semi-algebraic_set_is_bounded} If $m\ge2$ and $\varepsilon>0$,
then the optimization problem
\begin{equation}
(z_{1}^{C},\ldots,z_{m+1}^{C},\omega_{1}^{C},\ldots,\omega_{m+1}^{C})=\underset{\hat{x}\in\mathbb{R}^{2m+2}}{\arg\min}\ \underset{x\in S_{\varepsilon}}{\max}\|x-\hat{x}\|^{2},\label{eq:approx_Chebyshev_center_optimization}
\end{equation}
 has a solution.
\end{thm}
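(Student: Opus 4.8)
The plan is to recast~(\ref{eq:approx_Chebyshev_center_optimization}) as the minimization of the function
\[
g(\hat{x})\coloneqq\sup_{x\in S_{\varepsilon}}\|x-\hat{x}\|^{2},\qquad \hat{x}\in\mathbb{R}^{2m+2},
\]
and to show that $g$ is finite, convex and coercive, so that its infimum is attained by the Weierstrass theorem. All of this is routine once $S_{\varepsilon}$ is known to be compact, so the real work is to establish that $S_{\varepsilon}$ is bounded. Throughout I assume $S_{\varepsilon}\neq\emptyset$, as is implicit in the statement; note also that $S_{\varepsilon}$ is closed, being the solution set of finitely many non-strict polynomial (in)equalities.

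To bound $S_{\varepsilon}$ I treat the weights and the sigma points separately. The weights are controlled at once: from $\omega_{i}\ge\varepsilon\ge 0$ and $\sum_{i=1}^{m+1}\omega_{i}=1$ one gets $0\le\omega_{i}\le 1$ for every $i$. Bounding the sigma points is the crux, and it is precisely here that the hypothesis $m\ge2$ enters. Since $m\ge2$, the index $2$ lies in $\mathcal{K}_{1}\cup\mathcal{K}_{2}=\{1,\ldots,m\}$, so the second-order constraint in~(\ref{eq:approx_moment_matching_equation_with_uncertainty}) is present: either $\sum_{i=1}^{m+1}z_{i}^{2}\omega_{i}=E\{X^{2}\}$ (if $2\in\mathcal{K}_{1}$) or $\sum_{i=1}^{m+1}z_{i}^{2}\omega_{i}\le u_{2}$ (if $2\in\mathcal{K}_{2}$). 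In either case there is a finite constant $M$ with $\sum_{i=1}^{m+1}z_{i}^{2}\omega_{i}\le M$. Each summand $z_{i}^{2}\omega_{i}$ is nonnegative, hence bounded by $M$, and using the lower bound $\omega_{i}\ge\varepsilon>0$ yields $z_{i}^{2}\le M/\varepsilon$, that is $|z_{i}|\le\sqrt{M/\varepsilon}$ for every $i$. Thus $S_{\varepsilon}$ is bounded and, being closed, compact.

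I expect the boundedness of the sigma points to be the main obstacle, and the argument above also explains why both hypotheses are needed. For $m=1$ only the first moment is prescribed, and $\sum_{i}z_{i}\omega_{i}$ being fixed does not prevent the $z_{i}$ from diverging through cancellation; for $\varepsilon=0$ a weight may vanish and let the corresponding $z_{i}$ escape, which is exactly the unbounded behaviour of $S$ pointed out before the theorem. The role of the even-order (second) moment together with $\omega_{i}\ge\varepsilon$ is to turn the inequality $\sum_{i}z_{i}^{2}\omega_{i}\le M$ into a genuine coordinatewise bound.

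It remains to conclude from compactness. Writing $R\coloneqq\sup_{x\in S_{\varepsilon}}\|x\|<\infty$, we get $g(\hat{x})\le(R+\|\hat{x}\|)^{2}<\infty$, so $g$ is finite everywhere; as a supremum of the convex maps $\hat{x}\mapsto\|x-\hat{x}\|^{2}$ it is convex, and a finite convex function on $\mathbb{R}^{2m+2}$ is continuous. Finally, fixing any $x_{0}\in S_{\varepsilon}$ gives $g(\hat{x})\ge\|x_{0}-\hat{x}\|^{2}\ge(\|\hat{x}\|-\|x_{0}\|)^{2}\to\infty$ as $\|\hat{x}\|\to\infty$, so $g$ is coercive and its sublevel sets are compact. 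A continuous function with compact sublevel sets attains its minimum, which furnishes the desired minimizer; uniqueness, if wanted, follows from the strict convexity of the enclosing-ball problem.
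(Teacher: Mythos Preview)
Your proof is correct and follows essentially the same approach as the paper: both establish that $S_{\varepsilon}$ is bounded by noting the weights lie in the simplex and then using the second-moment constraint together with $\omega_{i}\ge\varepsilon>0$ to bound the $z_{i}$. Your version is more complete---you handle both cases $2\in\mathcal{K}_{1}$ and $2\in\mathcal{K}_{2}$, and you spell out via convexity and coercivity why boundedness of $S_{\varepsilon}$ actually yields a minimizer---whereas the paper simply asserts that boundedness suffices and writes only the case $2\in\mathcal{K}_{2}$.
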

\begin{proof}
It suffices to prove that the set $S_{\varepsilon}$ is bounded. The
variables $\omega_{i}$, $i=1,\ldots,m+1$ are all inside the $m+1$
dimensional simplex and thus bounded. Since
\[
\sum_{i=1}^{m+1}z_{i}^{2}\omega_{i}\le u_{2},
\]
and $\omega_{i}>0$, it is impossible for any variable $z_{i}$ to
grow without bound.
\end{proof}
\begin{rem}
It is interesting to note that alternatively, instead of the compact
(closed and bounded) set $S_{\varepsilon}$ one could consider the
set $\hat{S}$ defined as the set $S$ up to the first inequalities
replaced by a strict inequality (that is, $\omega_{j}\ge0$ is replaced
by $\omega_{j}>0$, $j\in\{1,\ldots,m\}$). This set is indeed bounded
as $S_{\varepsilon}$. However, strict inequalities are in general
not well-handled by numeric solvers. 
\end{rem}
\begin{figure}
\centering{}\includegraphics[scale=0.6]{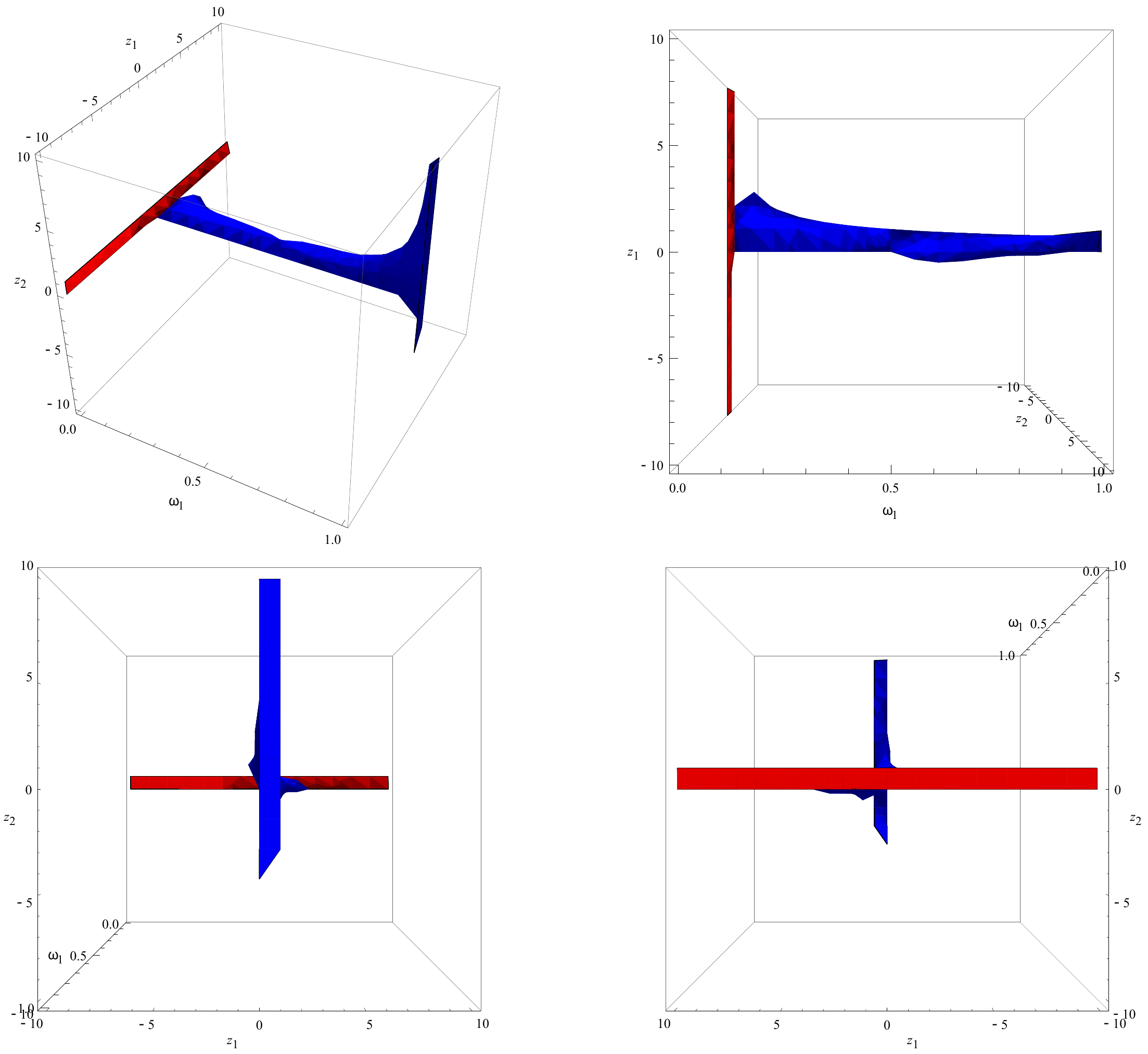}\caption{The regions $S\setminus S_{\varepsilon}$ (red) and $S_{\varepsilon}$
(blue) are plotted in various viewpoints to illustrate the approximation
of the semi-algebraic set $S$ by $S_{\varepsilon}$ with $\varepsilon=0.01$,
$m=1$, $\mathcal{K}_{1}=\emptyset$, $\mathcal{K}_{2}=\{1,2\}$,
$\ell_{1}=\ell_{2}=0$ and $u_{1}=u_{2}=1$. Only $\omega_{1}$, $z_{1}$
and $z_{2}$ are exhibited in the graphic, since $\omega_{2}=1-\omega_{1}$.
\label{fig:approximation-of-semi-algebraic-set}}
\end{figure}

The inner maximization problems of (\ref{eq:Chebychev_center_optimization})
and (\ref{eq:approx_Chebyshev_center_optimization}) are polynomial
optimization problems (POP). Such problems are ubiquitous and are
encountered in several fields \citep{LHZ:12}, such as: finance \citep{Mar:52,JR:06,KPR:09},
robust and nonlinear control \citep{RN:88,HL:04}, signal processing
\citep{MLD:03,QT:03}, quantum physics \citep{DLMO:08} and materials
science \citep{SYC:08}. It is known that this problem is NP-Hard
in general \citep{MK:87}, but despite this, it is possible to approximate
a POP by convex optimization problems which are computationally feasible
using Lasserre's hierarchy of semidefinite programming relaxations
\citep{Las:01,Las:09}. In fact, by using the GloptiPoly 3 package\footnote{Available for download at \url{http://homepages.laas.fr/henrion/software/gloptipoly3/}.}
\citep{HLL:09} or the SparsePOP package\footnote{Available for download at \url{http://sourceforge.net/projects/sparsepop/}.}
\citep{WKKMS:08} for MATLAB, or ncpol2sdpa library\footnote{Available for download at \url{https://gitlab.com/peterwittek/ncpol2sdpa}.}
\citep{Wit:15} for Python, these relaxations can be easily implemented
and enables the user to transparently construct an increasing sequence
of convex LMI relaxations whose optima are guaranteed to converge
monotonically to the global optimum of the original non-convex global
optimization problem \citep{HL:04}. Moreover, it is possible to numerically
certify the global optimality of the problem.

A direct approach to solve (\ref{eq:Chebychev_center_optimization})
would be to use the mentioned Lasserre's hierarchy to solve the inner
maximization problem for a fixed $\hat{x}=\hat{x}_{0}$ and a local
optimization algorithm to search which $\hat{x}_{0}$ minimizes (\ref{eq:Chebychev_center_optimization}).
This however, besides being a hard numeric problem, would not guarantee
a good approximation to the real value of the Chebychev center. In
the next section alternative ways to compute (\ref{eq:Chebychev_center_optimization})
with different trade-off between precision of the result and speed
of algorithm are proposed.

\section{Computation of robust sigma points\label{sec:Computation-of-robust-sigma-points}}

\subsection{Computation of an outer box to approximate the Chebychev center\label{subsec:Outer-box}}

The first proposed approach to find an approximate solution to (\ref{eq:approx_Chebyshev_center_optimization})
is to compute an outer-bounding box $B$ of $S_{\varepsilon}$ and
approximate the Chebychev center of $S_{\varepsilon}$ by the Chebychev
center of $B$. By Theorem~\ref{thm:semi-algebraic_set_is_bounded},
for $m\ge2$ the constraint set $S_{\varepsilon}$ is bounded. Thus,
each one of the following polynomial optimization problems on $x=(x_{1},\ldots,x_{2m})\in S_{\varepsilon}$
has solution
\begin{align}
\underline{z}_{i} & =\arg\min_{x\in\mathbb{R}^{2m+2}}x_{i}\text{ s.t. }x\in S_{\varepsilon},\ i=1,\ldots,m+1,\nonumber \\
\underbar{\ensuremath{\omega}}_{i} & =\arg\min_{x\in\mathbb{R}^{2m+2}}x_{i+m+1}\text{ s.t. }x\in S_{\varepsilon},\ i=1,\ldots,m+1,\nonumber \\
\overline{z}_{i} & =\arg\max_{x\in\mathbb{R}^{2m+2}}x_{i}\text{ s.t. }x\in S_{\varepsilon},\ i=1,\ldots,m+1,\nonumber \\
\overline{\omega}_{i} & =\arg\max_{x\in\mathbb{R}^{2m+2}}x_{i+m+1}\text{ s.t. }x\in S_{\varepsilon},\ i=1,\ldots,m+1,\label{eq:outer_box_optimization}
\end{align}
and can be used to construct an outer box 
\begin{gather*}
B=\{(z_{1},\ldots,z_{m+1},\omega_{1},\ldots,\omega_{m+1})\in\mathbb{R}^{2m+2}:\underline{z}_{i}\le z_{i}\le\overline{z}_{i},\\
\underline{\omega}_{i}\le\omega_{i}\le\overline{\omega}_{i},i=1,\ldots,m+1\}
\end{gather*}
such that $S_{\varepsilon}\subset B$.  The next theorem gives an
estimate of the error of the outer-bounding box approximation. 
\begin{thm}
Let $c_{B}$ be the center of Chebychev of $B$ and let $c_{S}$ be
the center of Chebychev of $S_{\varepsilon}$. If $d\coloneqq\|c_{B}-c_{S}\|$
is the defined as the distance between the centers of Chebychev, then
\[
d\le\frac{\textrm{diam}(B)}{2},
\]
 where $\textrm{diam}(B)$ is the diameter of $B$, that is, the least
upper bound of the set of all distances between pairs of points in
$B$. 
\end{thm}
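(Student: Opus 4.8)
The goal is to bound the distance $d = \|c_B - c_S\|$ between the Chebychev centers of $S_\varepsilon$ and its outer box $B$ by half the diameter of $B$. The plan is to avoid comparing the two centers directly -- which would require knowing where each sits inside the geometry -- and instead to locate both of them inside a single set and then invoke the diameter of that set. The key observation I would use is that, under the first definition of Chebychev center adopted in this paper (the center of the minimal enclosing ball), the Chebychev center of a set is contained in the convex hull of that set, and \emph{a fortiori} in any convex set containing it. Since $S_\varepsilon \subset B$ and $B$ is convex (it is a box), the minimal enclosing ball of $S_\varepsilon$ is no larger than $B$, and in particular $c_S \in B$. The center $c_B$ of the box is trivially in $B$ as well.

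First I would establish that $c_S \in B$. Because $S_\varepsilon \subseteq B$, every point of $S_\varepsilon$ lies in the box, so the coordinatewise minima and maxima defining $B$ also bound the points of $S_\varepsilon$; the Chebychev center $c_S$ lies in the convex hull of $S_\varepsilon$ (a standard fact: the center of the smallest enclosing ball of a bounded set lies in its convex hull), and since $B$ is convex and contains $S_\varepsilon$, it contains this convex hull, hence $c_S \in B$. Second, I would note $c_B \in B$ since $B$ is a box and $c_B$ is its geometric center. Third, with both $c_S$ and $c_B$ now points of $B$, the distance between them is at most the supremum of distances between pairs of points of $B$, which is exactly $\textrm{diam}(B)$ by definition:
\[
d = \|c_B - c_S\| \le \textrm{diam}(B).
\]

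The remaining gap is the factor of $\tfrac{1}{2}$, which is where I expect the real work to lie. The bound $d \le \textrm{diam}(B)$ alone is too weak. To sharpen it I would exploit the specific location of $c_B$: it is the \emph{center} of the box, equidistant from opposite faces, so every point of $B$ lies within distance $\tfrac{1}{2}\textrm{diam}(B)$ of $c_B$. Concretely, if $B$ has half-widths $r_j = (\overline{x}_j - \underline{x}_j)/2$ in each coordinate, then $c_B$ is the midpoint and for any $x \in B$ we have $\|x - c_B\|^2 = \sum_j (x_j - c_{B,j})^2 \le \sum_j r_j^2 = \left(\tfrac{1}{2}\textrm{diam}(B)\right)^2$, using that the diameter of an axis-aligned box equals the length of its main diagonal, $\textrm{diam}(B) = 2\sqrt{\sum_j r_j^2}$. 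Applying this with $x = c_S \in B$ immediately gives
\[
d = \|c_S - c_B\| \le \tfrac{1}{2}\,\textrm{diam}(B),
\]
which is the claim. The main obstacle is thus not the containment $c_S \in B$ (which is routine once convexity is invoked) but the clean identification that the box's center realizes the circumradius $\tfrac{1}{2}\textrm{diam}(B)$ as its covering radius; establishing the equality $\textrm{diam}(B) = 2\sqrt{\sum_j r_j^2}$ and the elementary coordinatewise estimate carries the factor of $\tfrac{1}{2}$ and completes the proof.
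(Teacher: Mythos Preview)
Your proof is correct and follows essentially the same approach as the paper: both rest on the two facts that $c_S$ lies in the convex hull of $S_\varepsilon$ (hence in the convex box $B$) and that every point of $B$ is within $\tfrac{1}{2}\,\textrm{diam}(B)$ of $c_B$. The paper presents this by contradiction and leaves the second fact implicit, whereas you argue directly and spell out in coordinates that the circumradius of a box equals half its diameter; the logical content is the same, and your version is actually more complete.
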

\begin{proof}
Suppose that $d>\textrm{diam}(B)/2$. Thus $c_{S}\not\in B$, and
since $c_{S}$ is the center of Chebychev of $S_{\varepsilon}$, one
has that $c_{S}$ is in the convex hull of $S_{\varepsilon}$. But
this implies that $c_{S}$ is in $B$, and the result follows by contradiction. 
\end{proof}

\subsection{Polynomial optimization program to compute minimum enclosing ball
\label{subsec:Two-stage-relaxation} }

Another approximate solution to problem~(\ref{eq:approx_Chebyshev_center_optimization})
can be found by using the two-stage approach of \citep{CLPR:14}.
Since the Chebychev center of $S_{\varepsilon}$ is always inside
of the outer box $B$  computed in Section~\ref{subsec:Outer-box},
problem~(\ref{eq:Chebychev_center_optimization}) is equivalent to
\begin{equation}
(z_{1}^{C},\ldots,z_{m+1}^{C},\omega_{1}^{C},\ldots,\omega_{m+1}^{C})=\underset{\hat{x}\in B}{\arg\min}\ \underset{x\in S_{\varepsilon}}{\max}\|x-\hat{x}\|^{2}.\label{eq:Chebychev_center_optimization_2}
\end{equation}

In the first stage, the inner optimization problem of (\ref{eq:Chebychev_center_optimization_2}),
namely,
\[
J(\hat{x})\coloneqq\max_{x\in S_{\varepsilon}}\|x-\hat{x}\|^{2},
\]
is approximated by a polynomial function $\tilde{J}_{\tau}(\hat{x})$
of degree $2\tau$ using the semidefinite optimization program outlined
in \citep{CLPR:14}. Then, in the second stage, the outer minimization
problem is replaced with the polynomial optimization problem
\begin{equation}
(z_{1}^{C},\ldots,z_{m+1}^{C},\omega_{1}^{C},\ldots,\omega_{m+1}^{C})=\underset{\hat{x}\in B}{\arg\min}\ \tilde{J}_{\tau}(\hat{x}),\label{eq:Chebychev_center_relaxed}
\end{equation}
which can be solved using the Lasserre's hierarchy. As the degree
$2\tau$ of the approximation polynomial increases, the solution of
problem (\ref{eq:Chebychev_center_relaxed}) converges to the solution
of problem (\ref{eq:Chebychev_center_optimization_2}) in the sense
of \citep{CLPR:14}. Alternatively, problem (\ref{eq:approx_Chebyshev_center_optimization})
can be shown to be equivalent to the following polynomial optimization
problem:
\begin{equation}
\begin{array}{cc}
\min_{\hat{x},r}r & \text{ s.t. }\|x-\hat{x}\|^{2}\le r,\\
 & x\in S_{\varepsilon},\,\hat{x}\in B,
\end{array}\label{eq:Chebychev_poly_prog}
\end{equation}
where $\hat{x}$ is the Chebychev center of $S_{\varepsilon}$. In
other words, the Chebychev center is the center of the radius of the
minimum volume ball that encloses $S_{\varepsilon}$. 
\begin{rem}
It is interesting to note that if the gap between $\ell_{i}$ and
$u_{i}$ is not large enough, the resulting semi-definite programs
from relaxing the polynomial optimization problems (\ref{eq:outer_box_optimization})
and (\ref{eq:Chebychev_poly_prog}) can be numerically unstable. In
this case, however, one may use the naive approach of computing sigma
points by using the arithmetic mean of the moments without loss, since
the difference between the real Chebychev center and the point computed
by this method would be not so great due to the small difference between
the upper and lower bounds of the moments.
\end{rem}

\section{Computation of UT transform\label{sec:Computation-of-UT}}

Suppose that $x_{CB}\coloneqq(z_{1}^{C},\ldots,z_{m+1}^{C},\omega_{1}^{C},\ldots,\omega_{m+1}^{C})$
is the Chebychev center of $S$ computed by one of the methods of
Section~\ref{sec:Computation-of-robust-sigma-points}. Based on (\ref{eq:posterior_discrete}),
define the function 
\[
UT_{f}(z_{1},\ldots,z_{m+1},w_{1},\ldots,w_{m+1})\coloneqq\sum_{i=1}^{m+1}w_{i}f(z_{i}).
\]
As discussed above, $UT_{f}$ is a good approximation for $E\{f(X)\}$
for a sufficiently large $m$. While it is true that $x_{CB}$ approximates
of the center of Chebychev of $S$, one also wish to know if $UT_{f}(x_{CB})$
is near to the Chebychev center of $UT_{f}(S)$ (that is, the image
of the set $S$ by the function $UT_{f}$). A class of functions $f$
such that $UT_{f}(x_{CB})$ is near the Chebychev center of $UT_{f}(S)$
is given by the functions such that the solution of the following
optimization problem
\begin{flalign}
\min\,D\ \text{s.t.\ } & (1-D)\|x-y\|\le\|UT_{f}(x)-UT_{f}(y)\|\label{eq:low-distortion-opt-problem}\\
 & \le(1+D)\|x-y\|,\nonumber \\
 & x,y\in S\nonumber 
\end{flalign}
is sufficiently small. If these values are sufficiently small, the
function $UT_{f}$ is a low-distortion geometric embedding \citep{Indy:01},
and this implies that $UT_{f}(x_{CB})$ is near the Chebychev center
of $UT_{f}(S)$. Finally, to compute (\ref{eq:low-distortion-opt-problem}),
one can estimate a solution for (\ref{eq:low-distortion-opt-problem})
by uniformly sampling random values $(x_{i},y_{i})$ of $S$ and computing
$\max\,D_{i}$, where 
\begin{flalign}
D_{i}\coloneqq\min\,D\ \text{s.t.\ } & (1-D)\|x_{i}-y_{i}\|\le\|UT_{f}(x_{i})-UT_{f}(y_{i})\|\label{eq:low-distortion-opt-problem-relaxed}\\
 & \le(1+D)\|x_{i}-y_{i}\|.\nonumber 
\end{flalign}

\section{Numerical experiments}

In this example, the computation of Chebychev center using the methods
proposed in this paper will be illustrated. Consider that one desires
to compute $2$ sigma points in a scenario where the values of the
first and second moment are not precisely known, but it is known that
$E\{X\}\in[0,1]$ and that $E\{X^{2}\}\in[0,1]$. In this case, one
has (\ref{eq:moment_matching_equation_with_uncertainty}) with $m=1$,
$\mathcal{K}_{1}=\emptyset$, $\mathcal{K}_{2}=\{1,2\}$, and $\ell_{1}=-3$
and $u_{1}=4$, and $\ell_{2}=0$ and $u_{2}=5$.

\textbf{Naive method:} For fixed values of $E\{X\}=\mu$ and $E\{X^{2}\}=V$,
one can compute a point inside the set $S$ by using the canonical
formula 
\begin{equation}
\begin{array}{c}
\omega_{1}=0.5,\ \omega_{2}=0.5,\\
z_{1}=\mu+\sqrt{V-\mu^{2}},\ z_{2}=\mu-\sqrt{V-\mu^{2}}.
\end{array}\label{eq:UT_formula_for_m=00003D1}
\end{equation}
A naive choice of sigma points would be the use of the arithmetic
mean of the lower and upper bounds for the moments in (\ref{eq:UT_formula_for_m=00003D1}).
Using (\ref{eq:UT_formula_for_m=00003D1}) with $\mu=\frac{1}{2}(\ell_{1}+u_{1})=0.5$
and $V=\frac{1}{2}(\ell_{2}+u_{2})=2.5$ results in the sigma points
$z_{1}=2$, $z_{2}=-1$ with weights given by $w_{1}=0.5$ and $w_{2}=0.5$. 

\textbf{Method 1: }By using the method proposed in Section~\ref{subsec:Outer-box}
it is possible to compute an outer-bounding box approximation to the
semi-algebraic set $S_{\varepsilon}$. Using $\varepsilon=0.01$,
the computation of an approximate point of the Chebychev center using
outer-bounding box approximation results in the sigma points $z_{1}=0$,
$z_{2}=0$ with weights respectively given by $w_{1}=0.5$ and $w_{2}=0.5$. 

\textbf{Method 2:} Finally, by computing a point using the method
proposed in Section~\ref{subsec:Two-stage-relaxation} with $\varepsilon=0.01$
results in the sigma points $z_{1}=-0.0001$, $z_{2}=0.0001$ with
weights respectively given by $w_{1}=0.1$ and $w_{2}=0.9$. 

In both Method 1 and 2, the MATLAB toolbox Gloptipoly 3 was used to
relax the polynomial optimization problems (\ref{eq:outer_box_optimization})
and (\ref{eq:Chebychev_poly_prog}). A relaxation order of $2$ was
used and the global optimality of the problems was numerically certified
by Gloptipoly3. Moreover, higher relaxation orders could not be used,
since using bigger relaxation orders results in semi-definite programs
with a large number of variables and constraints, and thus yields
an unstable numeric problem. 

Figure~\ref{fig:sigma_points} illustrates the semi-algebraic variety
$S_{\varepsilon}$ and the coordinates of the sigma points calculated
by the three methods. It can be seen by Figure~\ref{fig:sigma_points}
that the point computed by using the outer-bounding box approximation
is the best approximation to the real Chebychev center of the variety.
Moreover, it is important to note that the point calculated by Method
2 is very far from the real Chebychev center of the set $S_{\varepsilon}$.
This is due to the semi-definite relaxation of (\ref{eq:Chebychev_poly_prog})
being far from the exact solution. In principle, one can increase
the relaxation, but this can lead to an unstable numeric problem,
which global optimality cannot be more certified. 

\begin{figure}
\centering{}\includegraphics[scale=0.23]{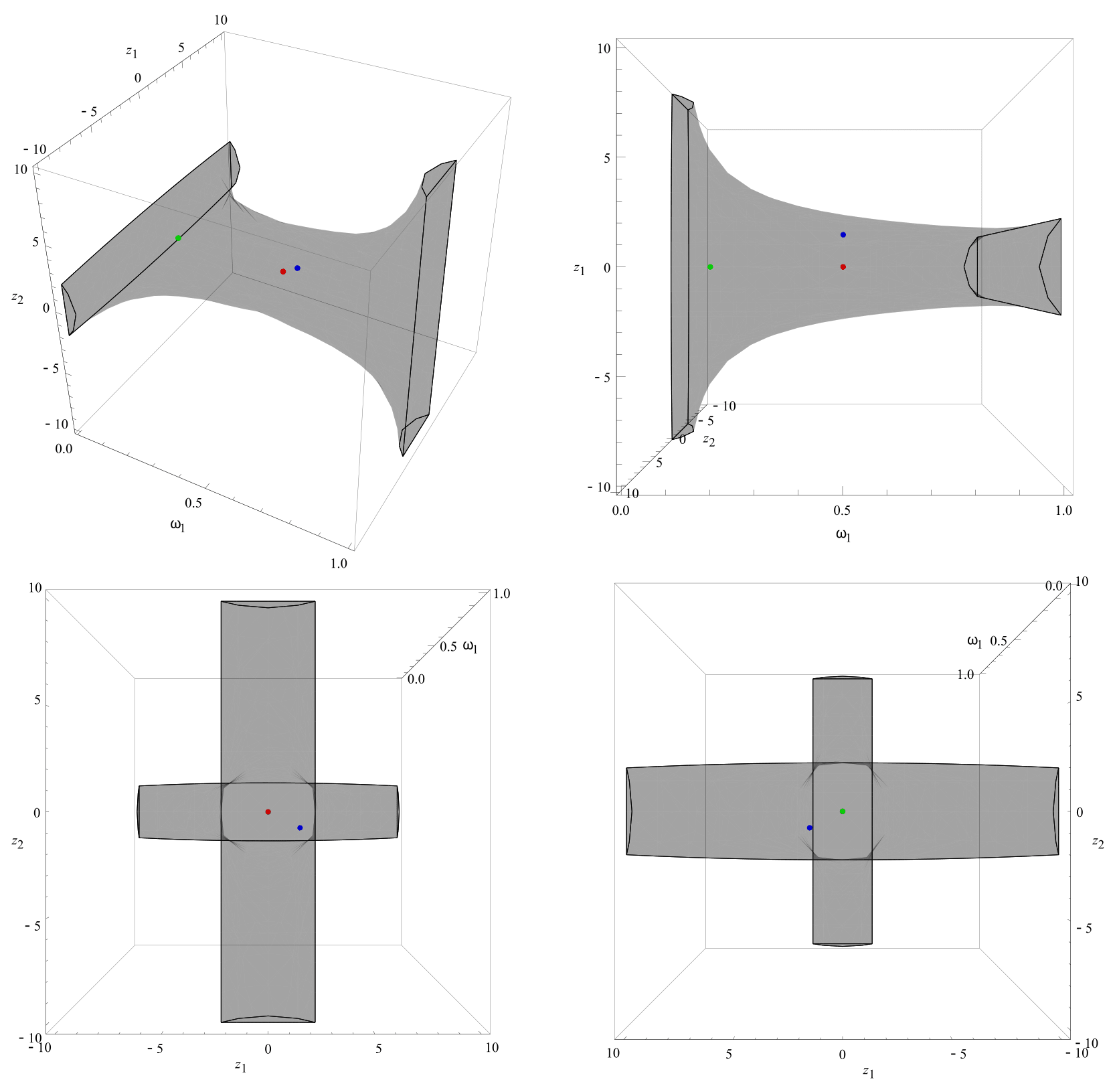}\caption{Semi-algebraic variety $S_{\varepsilon}$ with $\varepsilon=0.01$,
$m=1$, $\mathcal{K}_{1}=\emptyset$, $\mathcal{K}_{2}=\{1,2\}$,
$\ell_{1}=-3$, $u_{1}=4$, $\ell_{2}=0$ and $u_{2}=5$. The blue
point is the coordinates of the sigma-point obtained by the naive
method, the red point is the coordinates of the sigma-point obtained
by the outer-bounding box approximation, and the green point is obtained
by solving the polynomial optimization problem (\ref{eq:Chebychev_poly_prog})
directly. Only $\omega_{1}$, $z_{1}$ and $z_{2}$ are illustrated
in the graphic, since $\omega_{2}=1-\omega_{1}$. \label{fig:sigma_points}}
\end{figure}

Finally, to illustrate the robustness of the computed point, $100$
random samples from $E\{X\}$ and $E\{X^{2}\}$ are respectively draw
from the uniform distributions $U(\ell_{1},u_{1})$ and $U(\ell_{2},u_{2})$.
Solving (\ref{eq:low-distortion-opt-problem-relaxed}) for $f(x)=\sin(x)$
with $500$ samples gives an estimate for $D$ of $0.9956$, and the
posterior expectation in (\ref{eq:posterior_distribution}) is approximated
as 
\[
\textrm{E}\left\{ \sin(X)\right\} \approx\omega_{1}\sin(z_{1})+\omega_{2}\sin(z_{2}),
\]
where $\omega_{1}$, $\omega_{2}$, $z_{1}$ and $z_{2}$ are computed
according to the above methods. The mean error $\left|E\{\sin(X)\}-\sum_{i=1}^{2}\omega_{i}\sin(z_{i})\right|$
by the naive method is $0.0339$, while by Method~1 is $3.6932\cdot10^{-6}$
and by Method~2 is $1.2085\cdot10^{-4}$.

\section{Conclusion\label{sec:Conclusion}}

In this paper, it was proposed a way to devise a robust unscented
transform by the computation of the Chebychev center of the semialgebraic
set defined by the possible choices of sigma points and its weights.
Although, in general, this problem is NP-Hard, some methods are proposed
in this paper to approximate the solution of the original problem
by convex optimization problems. Further works intend to generalize
the present work to higher dimensions, as this could enable novel
filter designs for multivariate dynamical systems. Another possible
extension to this work is to consider adjustments of the proposed
algorithm to work with confidence bounds for the moments instead of
absolute upper and lower bounds, since the former is more common to
be used on interval estimation techniques, such as the bootstrapping
method.

Finally, one limitation of the method is that a great number of sigma
points can lead to a numerically unstable relaxation of the polynomial
optimization problems. Nevertheless, recent advances in polynomial
optimization techniques such as novel relaxation hierarchies based
on linear programming \citep{AM:17}  could help to scale the presented
approaches to a higher number of sigma points. 

\section*{Acknowledgments}

The authors would like to thanks Dario Piga  and Henrique M. Menegaz
for the worthy suggestions and discussions relevant to this paper.
We would also like to thanks the Brazilian agencies CNPq and CAPES
which partially supported this work.

\bibliographystyle{elsarticle-num}

\begin{thebibliography}{10}
\expandafter\ifx\csname url\endcsname\relax
  \def\url#1{\texttt{#1}}\fi
\expandafter\ifx\csname urlprefix\endcsname\relax\def\urlprefix{URL }\fi
\expandafter\ifx\csname href\endcsname\relax
  \def\href#1#2{#2} \def\path#1{#1}\fi

\bibitem{GVL:92}
K.~O. Geddes, S.~R. Czapor, G.~Labahn, Algorithms for Computer Algebra, Kluwer
  Academic Publishers, 1992.

\bibitem{PP:02}
A.~Papoulis, S.~U. Pillai, Probability, Random Variables, and Stochastic
  Processes, McGraw-Hill, New York, 2002.

\bibitem{JUD:95}
S.~J. Julier, J.~K. Uhlmann, H.~F. Durrant-Whyte, A new approach for filtering
  nonlinear systems, in: Proceedings of the 1995 American Control Conference,
  Vol.~3, 1995, pp. 1628--1632.

\bibitem{MSSTC:10}
L.~R. A.~X. {d}e Menezes, A.~J.~M. Soares, F.~C. Silva, M.~A.~B. Terada,
  D.~Correia, A new procedure for assessing the sensitivity of antennas using
  the unscented transform, IEEE Transactions on Antennas and Propagation 58~(3)
  (2010) 988--993.

\bibitem{CCDBMKAR:11}
M.~L. Carneiro, P.~H.~P. {d}e Carvalho, N.~Deltimple, L.~d.~C. Brito, L.~R.
  A.~X. {d}e Menezes, E.~Kerherve, S.~G. {d}e Araujo, A.~S. Rocira, {D}oherty
  amplifier optimization using robust genetic algorithm and unscented
  transform, in: Proceedings of the 9th IEEE International conference on New
  Circuits and Systems Conference, IEEE, 2011, pp. 77--80.

\bibitem{MIBV:15}
H.~M.~T. Menegaz, J.~Y. Ishihara, G.~A. Borges, A.~N. Vargas, A systematization
  of the unscented {K}alman filter theory, IEEE Transactions on Automatic
  Control 60~(10) (2015) 2583--2598.

\bibitem{Kri:10}
K.~Krishnamoorthy, Handbook of Statistical Distributions with Applications,
  {CRC} Press, 2010.

\bibitem{MK:87}
K.~G. Murty, S.~N. Kabadi, Some {NP}-complete problems in quadratic and
  nonlinear programming, Mathematical programming 39~(2) (1987) 117--129.

\bibitem{Las:01}
J.~B. Lasserre, Global optimization with polynomials and the problem of
  moments, SIAM Journal on Optimization 11~(3) (2001) 796--817.

\bibitem{Las:09}
J.~B. Lasserre, Moments, Positive Polynomials and Their Applications, Imperial
  College Press optimization series, Imperial College Press, Singapore, 2009.

\bibitem{Van:01}
J.~R. Van~Zandt, A more robust unscented transform, in: International symposium
  on optical science and technology, International Society for Optics and
  Photonics, 2001, pp. 371--380.

\bibitem{MP:13}
S.~Mehrotra, D.~Papp, Generating moment matching scenarios using optimization
  techniques, SIAM Journal on Optimization 23~(2) (2013) 963--999.

\bibitem{DS:97}
H.~Dette, W.~J. Studden, The Theory of Canonical Moments with Applications in
  Statistics, Probability, and Analysis, Vol. 338, John Wiley \& Sons, 1997.

\bibitem{HJL:01}
J.-B. Hiriart-Urruty, C.~Lemar{\'e}chal, Fundamentals of Convex Analysis,
  Grundlehren Text Editions, Springer, Germany, 2001.

\bibitem{RYB:18}
R.~Radhakrishnan, A.~Yadav, P.~Date, S.~Bhaumik, A new method for generating
  sigma points and weights for nonlinear filtering, IEEE Control Systems
  Letters 2~(3) (2018) 519--524.

\bibitem{PTVF:07}
W.~H. Press, S.~A. Teukolsky, W.~T. Vetterling, B.~P. Flannery, Numerical
  Recipes: The Art of Scientific Computing, 3rd Edition, Cambridge University
  Press, New York, 2007.

\bibitem{JU:04}
S.~J. Julier, J.~K. Uhlmann, Unscented filtering and nonlinear estimation,
  Proceedings of the IEEE 92~(3) (2004) 401--422.

\bibitem{SB:02}
J.~Stoer, R.~Bulirsch, Introduction to Numerical Analysis, Texts in Applied
  Mathematics, Springer, New York, 2002.

\bibitem{CHL:12}
R.~Caballero-Aguila, A.~Hermoso-Carazo, J.~Linares-P{\'e}rez, Extended and
  unscented filtering algorithms in nonlinear fractional order systems with
  uncertain observations, Applied Mathematical Sciences 6~(30) (2012)
  1471--1486.

\bibitem{BV:04}
S.~Boyd, L.~Vandenberghe, Convex Optimization, Cambridge University Press,
  Cambridge, 2004.

\bibitem{LHZ:12}
Z.~Li, S.~He, S.~Zhang, Approximation Methods for Polynomial Optimization:
  Models, Algorithms, and Applications, Springer Briefs in Optimization,
  Springer, New York, 2012.

\bibitem{Mar:52}
H.~Markowitz, Portfolio selection, The journal of finance 7~(1) (1952) 77--91.

\bibitem{JR:06}
E.~Jondeau, M.~Rockinger, Optimal portfolio allocation under higher moments,
  European Financial Management 12~(1) (2006) 29--55.

\bibitem{KPR:09}
P.~M. Kleniati, P.~Parpas, B.~Rustem, Partitioning procedure for polynomial
  optimization: {A}pplication to portfolio decisions with higher order moments,
  Tech. Rep. WPS-023, COMISEF Working Papers Series (2009).

\bibitem{RN:88}
A.~P. Roberts, M.~M. Newmann, Polynomial optimization of stochastic feedback
  control for stable plants, IMA Journal of Mathematical Control and
  Information 5~(3) (1988) 243--257.

\bibitem{HL:04}
D.~Henrion, J.-B. Lasserre, Solving nonconvex optimization problems, Control
  Systems, IEEE 24~(3) (2004) 72--83.

\bibitem{MLD:03}
B.~Mariere, Z.-Q. Luo, T.~N. Davidson, Blind constant modulus equalization via
  convex optimization, Signal Processing, IEEE Transactions on 51~(3) (2003)
  805--818.

\bibitem{QT:03}
L.~Qi, K.~L. Teo, Multivariate polynomial minimization and its application in
  signal processing, Journal of Global Optimization 26~(4) (2003) 419--433.

\bibitem{DLMO:08}
G.~Dahl, J.~M. Leinaas, J.~Myrheim, E.~Ovrum, A tensor product matrix
  approximation problem in quantum physics, Linear algebra and its applications
  420~(2) (2007) 711--725.

\bibitem{SYC:08}
S.~Soare, J.~W. Yoon, O.~Cazacu, On the use of homogeneous polynomials to
  develop anisotropic yield functions with applications to sheet forming,
  International Journal of Plasticity 24~(6) (2008) 915--944.

\bibitem{HLL:09}
D.~Henrion, J.-B. Lasserre, J.~L{\"o}fberg, {G}lopti{P}oly 3: moments,
  optimization and semidefinite programming, Optimization Methods \& Software
  24~(4-5) (2009) 761--779.

\bibitem{WKKMS:08}
H.~Waki, S.~Kim, M.~Kojima, M.~Muramatsu, H.~Sugimoto, Algorithm 883:
  {S}parse{P}op---a sparse semidefinite programming relaxation of polynomial
  optimization problems, ACM Transactions on Mathematical Software 35~(2)
  (2008) 15.

\bibitem{Wit:15}
P.~Wittek, Algorithm 950: {Ncpol2sdpa}--sparse semidefinite programming
  relaxations for polynomial optimization problems of noncommuting variables,
  ACM Transactions on Mathematical Software (TOMS) 41~(3) (2015) 21.

\bibitem{CLPR:14}
V.~Cerone, J.~B. Lasserre, D.~Piga, D.~Regruto, A unified framework for solving
  a general class of conditional and robust set-membership estimation problems,
  IEEE Transactions on Automatic Control 59~(11) (2014) 2897--2909.

\bibitem{Indy:01}
P.~Indyk, Algorithmic applications of low-distortion geometric embeddings, in:
  Proceedings of 42nd IEEE Symposium on Foundations of Computer Science, 2001,
  pp. 10--33.

\bibitem{AM:17}
A.~A. Ahmadi, A.~Majumdar, {DSOS} and {SDSOS} optimization: more tractable
  alternatives to sum of squares and semidefinite optimization, arXiv preprint
  arXiv:1706.02586.

\end{thebibliography}

\end{document}